\newtheorem{theorem}{Theorem}
\newtheorem{lemma}{Lemma}
\newtheorem{corollary}{Corollary}
\newtheorem{proposition}{Proposition}
\newtheorem{remark}{Remark}
\newcommand{\RR}{\mathbb{R}}
\newcommand{\NN}{\mathbb{N}}
\def\eps{\varepsilon}
\def\z{{\bf z}}
\def\w{{\bf w}}
\def\x{{\bf x}}
\def\y{{\bf y}}
\def\X{{\bf X}}
\def\cL{\mathcal{L}}
\def\cO{\mathcal{O}}
\begin{document}

\title{Error bounds for numerical differentiation\\ using kernels of finite smoothness}

\author{Oleg Davydov\thanks{Department of Mathematics, Justus Liebig University of Giessen, Arndtstrasse 2, 
35392 Giessen, Germany, \tt{oleg.davydov@math.uni-giessen.de}}
}

\date{November 15, 2025}

\maketitle

\begin{abstract}
We provide improved error bounds for kernel-based numerical differentiation in terms of growth functions when kernels are
of a finite smoothness, such as polyharmonic splines, thin plate splines or Wendland kernels. In contrast to existing
literature, the new estimates take into account the H\"older class smoothness of kernel's derivatives, which helps to improve
the order of the estimate. In addition, the new estimates apply to certain deficient point sets, relaxing a standard
assumption that an approximation with conditionally positive definite kernels must rely on determining sets for polynomials. 
\end{abstract}

\section{Introduction}\label{intro}

Let 
$$
K:\Omega \times \Omega  \to \RR
$$
be a symmetric conditionally positive definite (cpd) kernel on $\Omega\subset\RR^d$ of order $s\ge0$, see for example \cite{Wendland}, and $D$ a 
linear differential operator of order $k\ge0$ in $d$ real variables 
\begin{equation}\label{Dop}  
Df=\sum_{|\alpha|\le k}a_\alpha\partial^\alpha\! f,
\quad
\partial^\alpha:=\frac{\partial^{|\alpha|}}{\partial \x^{\alpha}}=
\frac{\partial^{|\alpha|}}{\partial x_1^{\alpha_1}\cdots\partial x_d^{\alpha_d}},
\quad|\alpha|=\alpha_1+\cdots+\alpha_d,
\end{equation}
with variable coefficients $a_\alpha$. In particular, for $k=0$ and $a_{(0,\ldots,0)}=1$, $D$ is the identity operator $Df=f$.

For the operator $D$ or any other operator or functional $A$ applicable to real-valued function of $d$ variables, 
we will use the notation $A'K$ and $A''K$ to denote the result of applying $A$ to the first,
respectively, second $d$-dimensional argument of the kernel $K$. If one operator is applied to the first and another to the
second argument of $K$, then we write for example $A'B''K$ when first $B$ was applied to the second argument, 
and then $A$ to the first. For the partial derivative operators $\partial^\alpha$ that are applied 
repeatedly to the first and second arguments of $K$, we use a simplified notation 
$$\partial^{\alpha,\beta}K(\x,\y)
:=\frac{\partial^{|\alpha|}}{\partial \x^{\alpha}}\frac{\partial^{|\beta|}}{\partial \y^{\beta}}K(\x,\y)
=\frac{\partial^{|\alpha|+|\beta|}}{\partial x_1^{\alpha_1}
\cdots\partial x_d^{\alpha_d}\partial y_1^{\alpha_1}\cdots\partial y_d^{\alpha_d}}K(\x,\y)$$
whenever the ordering of partial differentiation does not influence the result.

Given a point $\z\in\RR^d$ and a finite set $\X=\{\x_1,\ldots,\x_N\}\subset\Omega$, a \emph{kernel-based 
numerical differentiation formula}
\begin{equation}\label{ndif} 
D f(\z)\approx\sum_{j=1}^{N} w^*_j\, f(\x_j)
\end{equation}
with weight vector $\w^*=[w^*_j]_{j=1}^N$ is obtained by solving the linear system
\begin{alignat}{4}
\label{wsys}
     &\sum_{j=1}^{N} w_j K( \x_i, \x_j )&&+\sum_{j=1}^M v_jp_j(x_i)
&&= D'K(\z,\x_i),\qquad &&1\leq i\leq N,\\
\label{wsysp}
     &\sum_{j=1}^{N} w_j p_i(\x_j)&&%
&&= Dp_i(\z),\qquad &&1\leq i\leq M,
\end{alignat}
with unknowns $w_1\ldots,w_N$ and $v_1\ldots,v_M$, where 
$p_1,\ldots,p_M$ is a basis for the space $\Pi^d_s$ of $d$-variate polynomials of total degree at most $s-1$, such that
$M={d+s-1\choose d}$. %
In the case $s=0$ the kernel is strictly positive
definite and we set $\Pi^d_s=\{0\}$ and $M=0$.

It has been shown in \cite{D21} that the weights $w^*_j=w_j$, $j=1,\ldots,N$, are uniquely determined by \eqref{wsys}--\eqref{wsysp} as soon as
\eqref{wsysp} is consistent, that is, there is at least one vector $\w\in\RR^N$ satisfying \eqref{wsysp}. This extends the
more restrictive condition that the set  $\X$ is a determining set for $\Pi^d_s$, in which case the right hand side of 
formula \eqref{ndif} expresses the values at $\z$ of $D$ applied to the kernel-based  interpolant of $f$, see \cite{DS16}.
Usually, \eqref{wsysp} has many solutions, but only one of them satisfies \eqref{wsys}. The coefficients $v_j$ may not be
uniquely determined, but they are not needed for numerical differentiation.

Accurate numerical differentiation formulas of type \eqref{ndif} are needed in the meshless finite difference methods 
for partial differential equations, such as RBF-FD, see for example \cite{FFprimer15}. Asymptotic behavior of the error of
\eqref{ndif} when $s$ is fixed and $N\to\infty$ is usually estimated in terms of so-called \emph{fill distance} 
of the set $\X$ in $\Omega$ \cite{Wendland}. Applications to meshless finite difference methods however
requires estimates applicable to $N$ not significantly exceeding $M$ \cite{BFFB17}.
Such estimates using a \emph{growth function} instead of the fill distance have been proposed in \cite{DS16}.
However, as we will see in Section~\ref{trinv} the results of \cite{DS16} provide suboptimal error estimates in the case when
the kernels are of a finite smoothness. 
In this paper we improve the error estimates of \cite{DS16} by taking into account the fact that the
highest order continuous partial derivatives  of such kernels belong to certain H\"older smoothness classes. Moreover, we show
that the error bounds remain valid under the less restrictive assumption of solvability of \eqref{wsysp},
which allows to apply them to certain deficient sets as demonstrated numerically in \cite{D21}.

The paper is organized as follows. Section~\ref{main} is devoted to 
the main general results that are subsequently applied in Section~\ref{trinv} to the particularly important translation-invariant 
kernels of finite smoothness, with full details provided for polyharmonic splines, thin plate splines and Wendland kernels.

\section{Main results}\label{main}

We start by summarizing some basic facts about kernels and numerical differentiation formulas that will be needed in what
follows.

Theorems 1 and 2 in \cite{D21} imply the following statement.

\begin{proposition}\label{derivlemma} 
Let $D$ be a linear differential operator of order $k$ and let $K$ be a cpd kernel of
order $s$. The kernel--based numerical differentiation formula
(\ref{ndif}) is well defined 
as long as $D' K(\z,\x_i)$ exists for all $i=1\ldots,N$, and the linear system \eqref{wsysp} is consistent. 
The formula (\ref{ndif}) is exact for all functions $f$ of the form
\begin{equation}
\label{radtype}
     \sum_{i=1}^{N} a_i K( \cdot, \x_i )+\sum_{i=1}^M b_ip_i,\quad a_i,b_i\in\RR,
\end{equation}
satisfying 
\begin{equation}
\label{radcond}
     \sum_{j=1}^{N} a_jp_i(\x_j)=0,\quad 1\le i\le M.
\end{equation}
In particular, (\ref{ndif}) is exact for all polynomials in  $\Pi_{s}^d$.
\end{proposition}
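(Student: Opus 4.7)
The plan is to split the statement into three claims and handle each in turn. First, the assertion that the formula is well defined amounts to the existence and uniqueness of the weights $w_j^*$. Existence of $D'K(\z,\x_i)$ is assumed, and the fact that \eqref{wsys}--\eqref{wsysp} determines $w_1,\ldots,w_N$ uniquely whenever \eqref{wsysp} is consistent is exactly the statement invoked from Theorems~1 and 2 of \cite{D21} (and reproduced in the paragraph just before the proposition). So this step is essentially an appeal to prior work; I would not try to reprove it here.

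Next, for exactness on functions $f$ of the form \eqref{radtype} satisfying \eqref{radcond}, the strategy is a direct substitution. Applying $D$ gives
\begin{equation*}
Df(\z)=\sum_{i=1}^N a_i\,D'K(\z,\x_i)+\sum_{i=1}^M b_i\,Dp_i(\z),
\end{equation*}
while evaluating the right-hand side of \eqref{ndif} and interchanging sums (using symmetry $K(\x_j,\x_i)=K(\x_i,\x_j)$) yields
\begin{equation*}
\sum_{j=1}^N w_j^* f(\x_j)=\sum_{i=1}^N a_i\sum_{j=1}^N w_j^* K(\x_i,\x_j)+\sum_{i=1}^M b_i\sum_{j=1}^N w_j^* p_i(\x_j).
\end{equation*}
Now I would substitute \eqref{wsys} into the first inner sum, turning it into $D'K(\z,\x_i)-\sum_{\ell=1}^M v_\ell p_\ell(\x_i)$, and \eqref{wsysp} into the second inner sum, turning it into $Dp_i(\z)$. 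Collecting terms, the expression matches $Df(\z)$ plus the extra contribution $-\sum_{\ell=1}^M v_\ell \sum_{i=1}^N a_i p_\ell(\x_i)$, which vanishes by the side condition \eqref{radcond}. This is the key computation and it is routine, not an obstacle.

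Finally, polynomial exactness on $\Pi^d_s$ follows immediately by choosing all $a_i=0$, since then \eqref{radcond} holds trivially and every element of $\Pi^d_s$ is realized as $\sum_{i=1}^M b_i p_i$. The only genuinely delicate point in the whole argument is the invocation from \cite{D21} establishing that $w^*$ is well defined even when $\X$ is not a determining set for $\Pi^d_s$; without that, the natural temptation is to require unisolvency, which the proposition explicitly avoids. Once well-definedness is granted, the substitution argument above is transparent and the cancellation of the $v_\ell$-terms via \eqref{radcond} is exactly why the exactness class is described by \eqref{radtype}--\eqref{radcond} rather than just by the polynomial part.
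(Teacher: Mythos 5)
Your proposal is correct and matches the paper's treatment: the paper offers no proof of its own, deferring well-definedness entirely to Theorems~1 and~2 of \cite{D21} and remarking only that polynomial exactness is obvious from \eqref{wsysp}. Your explicit substitution computation for the exactness class \eqref{radtype}--\eqref{radcond}, with the $v_\ell$-terms cancelling via \eqref{radcond}, is a valid and welcome filling-in of the details the paper leaves to the citation.
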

\noindent
Note that exactness for polynomials is obvious from \eqref{wsysp}.

\medskip

Let ${\cal F}_{K}$ denote the {\em native space} of the kernel $K$ \cite{Schaback99,Wendland}. It is a semi-Hilbert space of
real valued functions on $\Omega$ arising as completion of the space of 
functions of the form (\ref{radtype}) satisfying \eqref{radcond}. 
Moreover, ${\cal F}_{K}$ is the direct sum of a Hilbert space and the polynomial space $\Pi^d_{s}\subset \ker {\cal F}_{K}$.
We denote by $(f,g)_K$ and $\|f\|_K=(f,f)_K^{1/2}$ the inner product and the (semi-)norm of ${\cal F}_{K}$.
We also denote by $\|\lambda\|_K$ the standard norm of any bounded linear functional $\lambda\in {\cal F}_{K}^*$.

In the case $s=0$ the strictly positive definite kernel $K$ is the reproducing kernel of ${\cal F}_{K}$. Otherwise, $K$
possesses a weaker reproduction property
\begin{equation}\label{reprL}
\lambda f=(f,\lambda''K)_K\quad\text{for all }\lambda\in \cL(\Pi^d_{s},\Omega)
\end{equation}
where $\cL(\Pi^d_{s},\Omega)$ consists of all  linear functionals $\lambda\in {\cal F}_{K}^*$ of the form
$\lambda f=\sum_{j=1}^{N} a_jf(\x_j)$ for arbitrary $\X\subset\Omega$ and coefficients $a_j$ such that \eqref{radcond} holds,
that is $\lambda p=0$ for all $p\in \Pi^d_{s}$. This property follows for example from \cite[Theorem~6.1]{Schaback99} 
since for the auxiliary kernel $\Psi$ used there the difference 
$\lambda''K-\lambda''\Psi$ belongs to $\Pi^d_{s}$ and hence it is orthogonal to any element $f$ of ${\cal F}_{K}$.

Using \eqref{reprL} we can argue as in \cite[Section~2]{DS16} to prove the following result.

\begin{proposition}\label{diffop} 
Let $D$ be a linear differential operator of order $k$, and $K$ a cpd kernel of
order $s$. The error functional 
\begin{equation}\label{diffunct}
\eps_{\w}: f\mapsto Df(\z) -\displaystyle{\sum_{j=1}^N w_jf(\x_j)   } 
\end{equation}
of any numerical differentiation formula  satisfying $\eps_{\w}p=0$ for all $p\in \Pi^d_{s}$ is continuous on  
$ {\cal F}_{K}$ if $\partial^{\alpha,\beta} K(\z,\z)$
exists for all $|\alpha|,|\beta|\le k$. Moreover,
\begin{equation}\label{munorm}
\|\eps_{\w}\|_K^2= \eps_{\w}'\eps_{\w}''K. 
\end{equation}
\end{proposition}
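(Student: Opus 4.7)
The plan is to exhibit a Riesz representer for $\eps_{\w}$ in the native space, namely
$$h(\x):=\eps_\w''K(\x)=\sum_{|\alpha|\le k} a_\alpha(\z)\,\partial^\alpha_\y K(\x,\y)\big|_{\y=\z}-\sum_{j=1}^N w_j K(\x,\x_j),$$
and to show that $\eps_\w f=(f,h)_K$ for all $f\in{\cal F}_K$. Once this representation is in hand, Cauchy--Schwarz yields continuity with $\|\eps_\w\|_K=\|h\|_K$, and applying the representation once more with $f:=h$ gives
$$\|\eps_\w\|_K^2=(h,h)_K=\eps_\w h=\eps_\w'(\eps_\w''K),$$
which is the claimed identity \eqref{munorm}.

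To carry this out, I would first verify that $h\in{\cal F}_K$. The point-evaluation terms $K(\cdot,\x_j)$ belong to the pre-native space generating ${\cal F}_K$. The derivative terms $\partial^\alpha_\y K(\cdot,\y)|_{\y=\z}$ are not in the pre-native space as they stand, but can be written as limits, in the semi-norm $\|\cdot\|_K$, of linear combinations of translates $K(\cdot,\z+h\bomega)$ arising from suitable divided differences. Existence of $\partial^{\alpha,\beta}K(\z,\z)$ for $|\alpha|,|\beta|\le k$ provides exactly the estimate needed to bound the semi-norms of these approximants uniformly and to pass to the limit, placing $h$ in ${\cal F}_K$. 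I would then verify the representation $\eps_\w f=(f,h)_K$ first for $f$ in the pre-native space written as $f=\sum_i a_i K(\cdot,\x_i)+p$ satisfying \eqref{radcond}, and extend to all $f\in{\cal F}_K$ by density. On the pre-native space, bilinearity, \eqref{reprL} applied to the point-evaluation part of $\eps_\w$, and the hypothesis $\eps_\w|_{\Pi^d_s}=0$ (which absorbs the polynomial summand $p$) reduce the task to the single identity
$$\bigl(f,\,\partial^\alpha_\y K(\cdot,\y)|_{\y=\z}\bigr)_K=\partial^\alpha f(\z)-\partial^\alpha p(\z),\qquad |\alpha|\le k.$$

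The main obstacle is precisely this last identity, which amounts to extending the reproduction property \eqref{reprL} from point-evaluation combinations in $\cL(\Pi^d_s,\Omega)$ to differential functionals $f\mapsto\partial^\alpha f(\z)$. I would address it by approximating $\partial^\alpha_\z$ by point-evaluation finite-difference functionals $\lambda_n\in\cL(\Pi^d_s,\Omega)$, i.e.\ combinations of values at nodes near $\z$ that are exact on $\Pi^d_s$ and reproduce $\partial^\alpha p(\z)$ for every $p\in\Pi^d_s$. Applying \eqref{reprL} to each $\lambda_n$ gives $\lambda_n f=(f,\lambda_n''K)_K$, and the existence of $\partial^{\alpha,\beta}K(\z,\z)$ ensures that $\lambda_n''K\to \partial^\alpha_\y K(\cdot,\y)|_{\y=\z}$ in $\|\cdot\|_K$, which by Cauchy--Schwarz yields simultaneous convergence on both sides; the limit is then the desired identity on the pre-native space. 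This is the only nontrivial step; everything else is bookkeeping with the semi-inner product and the polynomial-annihilation hypothesis on $\eps_\w$.
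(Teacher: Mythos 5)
Your overall strategy---exhibit $\eps_{\w}''K$ as the Riesz representer of $\eps_{\w}$ by approximating the derivative functionals with divided differences of point evaluations and passing to the limit via \eqref{reprL}---is precisely the route the paper takes: its proof is a one-line deferral to \cite[Section~2]{DS16} based on \eqref{reprL}, and what you describe is that argument written out.

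However, the way you decompose the argument contains a step that fails as stated. The reproduction property \eqref{reprL} holds only for functionals in $\cL(\Pi^d_{s},\Omega)$, i.e.\ point-evaluation combinations that annihilate $\Pi^d_{s}$. The point-evaluation part $f\mapsto -\sum_{j}w_jf(\x_j)$ of $\eps_{\w}$ does \emph{not} annihilate $\Pi^d_{s}$ in general (only the full functional $\eps_{\w}$ does), so \eqref{reprL} cannot be ``applied to the point-evaluation part of $\eps_{\w}$'' in isolation. Likewise, your auxiliary functionals $\lambda_n$ are required both to lie in $\cL(\Pi^d_{s},\Omega)$ (hence $\lambda_n p=0$ for all $p\in\Pi^d_{s}$) and to satisfy $\lambda_n p=\partial^\alpha p(\z)$ for all $p\in\Pi^d_{s}$; these demands are compatible only when $|\alpha|\ge s$, which need not hold since $|\alpha|\le k$ and no relation between $k$ and $s$ is assumed. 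The standard repair, implicit in \cite{DS16}, is not to split $\eps_{\w}$ at all: discretize the whole operator $D$ at once by a finite-difference functional $\tilde D_n\delta_\z$ exact for $D$ on a sufficiently large polynomial space, and set $\eps_n f:=\tilde D_nf(\z)-\sum_j w_jf(\x_j)$. Then $\eps_n p=Dp(\z)-\sum_j w_jp(\x_j)=\eps_{\w}p=0$ for all $p\in\Pi^d_{s}$, so $\eps_n\in\cL(\Pi^d_{s},\Omega)$ and \eqref{reprL} applies to $\eps_n$ directly; the existence of $\partial^{\alpha,\beta}K(\z,\z)$ for $|\alpha|,|\beta|\le k$ is exactly what makes $(\eps_n-\eps_m)'(\eps_n-\eps_m)''K\to0$, i.e.\ $(\eps_n''K)$ Cauchy in $\|\cdot\|_K$, and passing to the limit yields both the continuity of $\eps_{\w}$ and \eqref{munorm}. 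With this repackaging the polynomial correction terms $-\partial^\alpha p(\z)$ you introduce never arise, and the rest of your outline goes through.
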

\noindent
Note that the only improvement in comparison to \cite[Lemma~4]{DS16} is that we do not require from $\X$ to be a determining
set for $\Pi^d_{s}$.

\medskip

Similarly, without changing its proof we obtain a slight improvement of \cite[Lemma~6]{DS16}:

\begin{proposition}
\label{powerK}
Let $D$ be a linear differential operator of order $k$, and $K$ a cpd kernel of order $s$ on $\Omega\subset\RR^d$. 
For $\z\in\Omega$ and $\X=\{\x_1,\ldots,\x_N\}\subset\Omega$, assume that 
\begin{itemize}
\item[(a)]
$\partial^{\alpha,\beta} K(\z,\z)$ exists for all $|\alpha|,|\beta|\le k$,
\item[(b)]
$D' K(\z,\x_i)$ exists for all $i=1\ldots,N$, and
\item[(c)]
the linear system 
\eqref{wsysp} is consistent. 
\end{itemize}
Then the kernel-based
numerical differentiation formula \eqref{ndif} satisfies
\begin{equation}\label{PestK}
\Big|Df(\z) -\sum_{j=1}^N w^*_jf(\x_j)\Big|\le P_{K,D,\X,\z}\,\|f\|_{K}\quad\text{for all }f\in {\cal F}_{K},
\end{equation}
where $P_{K,D,\X,\z}$ is the \emph{power function}
defined by
\begin{equation}\label{est2p}
P_{K,D,\X,\z}^2=\min\Big\{Q_{K,D,\X,\z}(\w):\w\in\RR^{N},\;
\eps_{\w}p=0\;\hbox{for all }p\in\Pi^d_{s}\Big\},
\end{equation}
with 
\begin{equation} \label{QlX}
Q_{K,D,\X,\z}(\w):= \eps_{\w}'\eps_{\w}''K.
\end{equation}
\end{proposition}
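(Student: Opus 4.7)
The plan is to combine Propositions~\ref{derivlemma} and~\ref{diffop} with the observation that $\w^*$ minimizes $Q_{K,D,\X,\z}$ over vectors satisfying the polynomial-exactness constraint defining \eqref{est2p}. Proposition~\ref{derivlemma}, whose hypotheses are supplied by (b) and (c), ensures that $\w^*$ is well defined and that $\eps_{\w^*}$ annihilates $\Pi_s^d$, so $\w^*$ is admissible in \eqref{est2p}. Since (a) provides the hypothesis of Proposition~\ref{diffop}, the functional $\eps_{\w^*}$ is bounded on $\cF_K$ with
$$\|\eps_{\w^*}\|_K^2 = \eps'_{\w^*}\eps''_{\w^*}K = Q_{K,D,\X,\z}(\w^*).$$
The definition of the dual norm then gives $|\eps_{\w^*}f|\le\|\eps_{\w^*}\|_K\|f\|_K=\sqrt{Q_{K,D,\X,\z}(\w^*)}\,\|f\|_K$ for every $f\in\cF_K$, which is the required inequality \eqref{PestK} provided $\w^*$ actually achieves the minimum in \eqref{est2p}.

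That identification is the substantive step. Expanding $\eps'_{\w}\eps''_{\w}K$ and using the symmetry of $K$ yields
$$Q_{K,D,\X,\z}(\w) = D'D''K(\z,\z) - 2\sum_j w_j D'K(\z,\x_j) + \sum_{i,j} w_i w_j K(\x_i,\x_j),$$
a quadratic function of $\w$ on the affine set defined by \eqref{wsysp}. Forming the Lagrangian with multipliers $2v_1,\ldots,2v_M$ attached to \eqref{wsysp} and equating $\partial_{w_k} L = 0$, I recover exactly \eqref{wsys} after using the symmetry of $K$; hence $\w^*$, together with the auxiliary $v_j$ from \eqref{wsys}, is a KKT point of the constrained problem.

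It remains to show that every such KKT point is a global minimum. For any other feasible $\w$, the difference $\bdelta := \w - \w^*$ satisfies $\sum_j \delta_j p_i(\x_j) = 0$ for $i=1,\ldots,M$, so conditional positive definiteness of $K$ of order $s$ gives $\sum_{i,j}\delta_i\delta_j K(\x_i,\x_j) \ge 0$. Expanding $Q_{K,D,\X,\z}(\w^*+\bdelta)$ and using \eqref{wsys} to rewrite the cross-term $\sum_{i,j} w^*_i \delta_j K(\x_i,\x_j)$, the contribution $\sum_l v_l \sum_j \delta_j p_l(\x_j)$ drops out because $\bdelta$ annihilates $\Pi_s^d$, and the remaining linear-in-$\bdelta$ pieces cancel against $-2\sum_j \delta_j D'K(\z,\x_j)$, leaving
$$Q_{K,D,\X,\z}(\w) - Q_{K,D,\X,\z}(\w^*) = \sum_{i,j}\delta_i\delta_j K(\x_i,\x_j) \ge 0.$$
This cancellation, which must be arranged so that conditional positive definiteness is invoked only on the subspace where it is valid, is the principal obstacle and is the same computation as in \cite[Lemma~6]{DS16}; the only new ingredient here is that Proposition~\ref{derivlemma} has replaced the determining-set hypothesis on $\X$ by mere solvability of \eqref{wsysp}.
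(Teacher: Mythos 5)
Your proposal is correct and follows essentially the same route as the paper, which simply defers to the proof of \cite[Lemma~6]{DS16}: admissibility and polynomial exactness of $\w^*$ come from Proposition~\ref{derivlemma}, continuity and the norm identity from Proposition~\ref{diffop}, and the optimality of $\w^*$ from the standard cancellation $Q_{K,D,\X,\z}(\w^*+\bdelta)-Q_{K,D,\X,\z}(\w^*)=\sum_{i,j}\delta_i\delta_j K(\x_i,\x_j)\ge 0$, valid because $\bdelta$ annihilates $\Pi^d_s$ on $\X$ so that conditional positive definiteness applies. Your computation of the cross-term via \eqref{wsys} is exactly the point where mere consistency of \eqref{wsysp} suffices in place of the determining-set hypothesis, which is the improvement the paper claims.
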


\medskip

\noindent
Note that the formula \eqref{ndif} is well defined by 
Proposition~\ref{derivlemma}.

\medskip

In view of Proposition~\ref{diffop}, $Q_{K,D,\X,\z}(\w)$ is the square of the worst case error of numerical differentiation using weight
vectors $\w$ satisfying the polynomial exactness condition $\eps_{\w}p=0$ for all $p\in\Pi^d_s$, which we will briefly refer 
to as $\eps_{\w}(\Pi^d_s)=0$, for functions in the unit ball of $\mathcal{F}_K$,
\begin{equation}\label{optest}
Q_{K,D,\X,\z}(\w)=\sup_{\substack{f\in \mathcal{F}_K\\ \|f\|_K\le 1}}\Big|Df(\z)-\sum_{j=1}^Nw_jf(x_j)\Big|^2
\quad\text{ if $\eps_{\w}(\Pi^d_s)=0$}.
\end{equation}
Moreover, Proposition~\eqref{powerK} shows that the kernel-based numerical differentiation formula \eqref{ndif}
provides the \emph{optimal recovery} 
of $Df(\z)$ for $f$ in the native space $\mathcal{F}_K$ among all numerical differentiation formulas
exact for polynomials in $\Pi^d_{s}$, and the power function gives the error of the optimal recovery,
\begin{equation}\label{optrec}
P_{K,D,\X,\z}=\inf_{\substack{w\in\RR^N\\ \eps_{\w}(\Pi^d_s)=0}}\;
\sup_{\substack{f\in \mathcal{F}_K\\ \|f\|_K\le 1}}\Big|Df(\z)-\sum_{j=1}^Nw_jf(x_j)\Big|.
\end{equation}

\begin{remark} We can easily compute $Q_{K,D,\X,\z}(\w)$ for any weight vector $\w$,
\begin{equation}\label{Qform}
Q_{K,D,\X,\z}(\w)=D'D''K(\z,\z)-2\sum_{j=1}^N\! w_jD'K(\z,\x_j)+\!\sum_{i,j=1}^N\!w_iw_jK(\x_i,\x_j),
\end{equation}
where we used the identity $D''K(\x_j,\z)=D'K(\z,\x_j)$ that holds in view of the symmetry of $K$.
For $\w=\w^*$ the formula simplifies further to 
\begin{equation}\label{Pform}
P_{K,D,\X,\z}^2=Q_{K,D,\X,\z}(\w^*)=D'D''K(\z,\z)-\sum_{j=1}^N w^*_jD'K(\z,\x_j)
\end{equation}
since 
$$
D'K(\z,\x_j)=\sum_{i=1}^{N} w^*_i K( \x_j, \x_i )+\sum_{i=1}^M v_ip_i(x_j)$$
and therefore
$$
\sum_{j=1}^N w^*_jD'K(\z,\x_j)=\sum_{i,j=1}^N w^*_iw^*_jK(\x_i,\x_j)$$
by \eqref{wsys}--\eqref{wsysp}.
\end{remark}

We now extend the upper bound for $Q_{K,D,\X,\z}(\w)$ given in \cite[Lemma 7]{DS16} to kernels with weaker smoothness
assumptions and all sets $\X$ for which \eqref{wsysp} is consistent.

Let
$$
S_{\z,\X}:=\bigcup_{i=1}^N[\z,\x_i].$$
For any function $U:S_{\z,\X}\times S_{\z,\X}\to \RR$, we set
$$
\big|U\big|_{\z,\X,\gamma}
:=\sup_{\substack{\x,\y\in S_{\z,\X}\\ \x\ne\z,\,\y\ne\z}}
\frac{\big|U(\x,\y)-U(\x,\z)-U(\z,\y)+U(\z,\z)\big|}{\|\x-\z\|_2^\gamma\,\|\y-\z\|_2^\gamma},
\quad 0<\gamma\le 1,$$
and for $r\in\NN$, $0<\gamma\le 1$, we set
$$
|U|_{\z,\X,r+\gamma}:=\frac1{(\gamma+1)^2\cdots (\gamma+r)^2}
\Big(\sum_{|\alpha|=r}\;\sum_{|\beta|=r}{r\choose \alpha}{r\choose \beta}
\big|\partial^{\alpha,\beta} U\big|_{\z,\X,\gamma}^2\Big)^{1/2},
$$
where ${r\choose \alpha}:=\frac{r!}{\alpha!}$, $\alpha!:=\alpha_1!\cdots\alpha_d!$, whenever $U$ is defined on
$\Omega\times\Omega$ for a domain $\Omega$ containing $S_{\z,\X}$,
and derivatives $\partial^{\alpha,\beta} U(\x,\y)$, with $|\alpha|=r$ and $|\beta|=r$ exist for all $\x,\y\in S_{\z,\X}$.

\begin{proposition}
\label{fpestbt}
Let $D$ be a linear differential operator of order $k$, and $K$ a cpd kernel of order $s$ on $\Omega\subset\RR^d$. 
Given $\z\in\Omega$, $\X=\{\x_1,\ldots,\x_N\}$, with $S_{\z,\X}\subset\Omega$, and $\w\in\RR^N$, 
assume that for some integer $q\ge \max\{s,k+1\}$ and $m$ such that  $k\le m\le q-1$, 
\begin{itemize}
\item[(a)] $K$ possesses continuous  partial derivatives  $\partial^{\alpha,\beta} K(\x,\y)$  for all
$|\alpha|,|\beta|\le m$ and all  $\x,\y\in S_{\z,\X}$,  
\item[(b)] $|K|_{\z,\X,m+\gamma}<\infty$, for some $\gamma\in(0,1]$, and 
\item[(c)] $\w$ satisfies
$D p(\z) =\sum_{j=1}^{N} w_jp( \x_j )$ for all $p\in\Pi^d_q$.
\end{itemize}
Then
\begin{equation}\label{Qestbt2}
Q_{K,D,\X,\z}(\w)\le \Big(\sum_{i=1}^N|w_i|\,\|\x_i-\z\|_2^{m+\gamma}\Big)^2|K|_{\z,\X,m+\gamma}.
\end{equation}
\end{proposition}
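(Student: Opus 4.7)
The starting point is the identity $Q_{K,D,\X,\z}(\w) = \eps_{\w}'\eps_{\w}'' K$ from \eqref{QlX}, combined with the observation that assumption (c) means $\eps_\w$ annihilates every polynomial of degree at most $q-1$ in one variable; since $m\le q-1$, polynomials of degree $\le m$ are killed when $\eps_\w$ is applied in either argument of $K$. The plan is to subtract from $K$ a double Taylor polynomial around $(\z,\z)$, leaving a mixed remainder whose built-in structure matches the definition of $|K|_{\z,\X,m+\gamma}$ term by term.

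Concretely, I would first Taylor-expand $K(\x,\y)$ in the first argument around $\z$ to order $m$ using the integral remainder in its H\"older form,
\[
K(\x,\y) = \sum_{|\alpha|\le m}\frac{(\x-\z)^\alpha}{\alpha!}\partial^{\alpha,0}K(\z,\y) + m\!\!\sum_{|\alpha|=m}\!\frac{(\x-\z)^\alpha}{\alpha!}\int_0^1\!(1-t)^{m-1}\bigl[\partial^{\alpha,0}K(\z+t(\x-\z),\y) - \partial^{\alpha,0}K(\z,\y)\bigr]dt,
\]
and then apply exactly the same expansion in $\y$ to the bracket. This gives a decomposition $K = T + \tilde R$ where $T(\x,\y)$ is a polynomial of degree $\le m$ in each variable separately (so $\eps_\w' T = 0$ and $\eps_\w'' T = 0$) and
\[
\tilde R(\x,\y) = m^2\!\!\sum_{|\alpha|=|\beta|=m}\!\!\frac{(\x-\z)^\alpha(\y-\z)^\beta}{\alpha!\,\beta!}\int_0^1\!\int_0^1 (1-t)^{m-1}(1-t')^{m-1}\Delta_{\alpha,\beta}(\x,\y;t,t')\,dt\,dt',
\]
with $\Delta_{\alpha,\beta}(\x,\y;t,t') = \partial^{\alpha,\beta}K(\z+t(\x-\z),\z+t'(\y-\z)) - \partial^{\alpha,\beta}K(\z+t(\x-\z),\z) - \partial^{\alpha,\beta}K(\z,\z+t'(\y-\z)) + \partial^{\alpha,\beta}K(\z,\z)$, i.e.\ the same four-term combination that appears in the definition of $|\partial^{\alpha,\beta}K|_{\z,\X,\gamma}$.

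Next I would bound $|\tilde R(\x,\y)|$ using assumption (b): the pointwise estimate $|\Delta_{\alpha,\beta}|\le |\partial^{\alpha,\beta}K|_{\z,\X,\gamma}(t\|\x-\z\|_2)^\gamma(t'\|\y-\z\|_2)^\gamma$ reduces each $t$-integral to the Beta value $m\int_0^1(1-t)^{m-1}t^\gamma dt = m!/\prod_{r=1}^m(\gamma+r)$, after which Cauchy--Schwarz over the double index $(\alpha,\beta)$ together with the multinomial identity $\sum_{|\alpha|=m}{m\choose\alpha}(\x-\z)^{2\alpha}=\|\x-\z\|_2^{2m}$ recombines the combinatorial factors into exactly the normalization of $|K|_{\z,\X,m+\gamma}$, yielding the pointwise inequality
\[
|\tilde R(\x,\y)|\le \|\x-\z\|_2^{m+\gamma}\|\y-\z\|_2^{m+\gamma}\,|K|_{\z,\X,m+\gamma}.
\]

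Finally, since polynomial exactness kills $T$ in each argument, $Q_{K,D,\X,\z}(\w) = \eps_{\w}'\eps_{\w}''\tilde R$, and I would reduce this to the clean double sum $\sum_{i,j}w_iw_j\tilde R(\x_i,\x_j)$: the prefactor $(\x-\z)^\alpha$ with $|\alpha|=m\ge k$, together with the vanishing of $\Delta_{\alpha,\beta}$ at $\x=\z$, forces every partial derivative of order $\le k$ of $\tilde R(\cdot,\y)$ at $\x=\z$ to vanish, so the $D'$ contribution inside $\eps_\w'$ drops; the symmetric argument handles $D''$ inside $\eps_\w''$. A single triangle inequality then produces \eqref{Qestbt2}. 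The most delicate step will be the combinatorial bookkeeping in the Cauchy--Schwarz, matching the factor $1/\prod_{r=1}^m(\gamma+r)^2$ and the weights ${m\choose\alpha}{m\choose\beta}$ from the definition of $|K|_{\z,\X,m+\gamma}$ with the constants produced by the double Taylor integrals; the vanishing argument for the $D'$ and $D''$ pieces is conceptually straightforward by Leibniz, but deserves a careful look in the edge case $k=m$.
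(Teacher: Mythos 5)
Your proposal is correct and follows essentially the same route as the paper's proof: your decomposition $K=T+\tilde R$ (Taylor expansion in $\x$, then in $\y$ applied to the remainder) is exactly the Boolean-sum splitting $B_{m,\z}=T^\x_{m,\z}+T^\y_{m,\z}-T^\x_{m,\z}T^\y_{m,\z}$ with remainder $R^\x_{m,\z}R^\y_{m,\z}K$ used there, and the subsequent H\"older bound on $\Delta_{\alpha,\beta}$, the Beta integral $m\int_0^1(1-t)^{m-1}t^\gamma\,dt=m!/\prod_{r=1}^m(\gamma+r)$, the multinomial identity, and the Cauchy--Schwarz recombination into $|K|_{\z,\X,m+\gamma}$ all match the paper step for step. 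Your explicit justification that the $D'$ and $D''$ contributions to $\eps_\w'\eps_\w''\tilde R$ vanish (because all $\x$-derivatives of the double remainder of order $\le m\ge k$ vanish at $\z$, and symmetrically in $\y$) is actually spelled out more carefully than in the paper, which subsumes this under ``thanks to Assumption~(c)''; only your parenthetical claim that $\eps_\w'T=0$ and $\eps_\w''T=0$ separately is slightly loose (the piece $T^\y_{m,\z}R^\x_{m,\z}K$ is annihilated by $\eps_\w''$ but not by $\eps_\w'$ alone), though the needed conclusion $\eps_\w'\eps_\w''T=0$ holds.
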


\begin{proof} Observe that $Q_{K,D,\X,\z}(\w)$, given by \eqref{QlX}, is well defined because all assumptions of
 Proposition~\ref{powerK} are satisfied. In particular, \eqref{wsysp} is consistent thanks to (c) because $q\ge s$.

We adapt in part the proofs of \cite[Lemma 7]{DS16} and \cite[Lemma~2]{DS18}.
Denote by $T_{q,\z}f$ the  Taylor polynomial of degree $m$ of a function $f:\Omega\to\RR$ centered at $\z$,
$$%
T_{m,\z}f(\x)=\sum_{|\alpha|\le m}\frac{(\x-\z)^\alpha}{\alpha!}\partial^\alpha f(\z).
$$%
As in \cite[Lemma~2]{DS18}, we see that for any point $\x\in S_{\z,\X}$ the remainder
$$
R_{m,\z}f(\x)=f(\x)-T_{m,\z}f(\x)$$
can be represented as
$$%
R_{m,\z}f(\x)=m\sum_{|\alpha|=m}\frac{(\x-\z)^\alpha}{\alpha!}
\int_0^1(1-t)^{m-1}[\partial^\alpha f(\z+t(\x-\z))-\partial^\alpha f(\z)]\,dt,
$$%
as long as $f$ is $m$ times continuously differentiable everywhere in $S_{\z,\X}$.
When an expression $U(\x,\y)$ depends on two $d$-dimensional variables $\x$ and $\y$, we write 
$$
T^\x_{m,\z}U(\x,\y)=\sum_{|\alpha|\le m}\frac{(\x-\z)^\alpha}{\alpha!}
\partial^{\alpha,0}U(\z,\y),$$
respectively,
$$
T^\y_{m,\z}U(\x,\y)=\sum_{|\alpha|\le m}\frac{(\y-\z)^\alpha}{\alpha!}
\partial^{0,\alpha}U(\x,\z),$$
for the Taylor polynomial in $\x$, respectively, $\y$, and use a similar notation 
$R^\x_{m,\z}U(\x,\y)=U(\x,\y)-T^\x_{m,\z}U(\x,\y)$ and $R^\y_{m,\z}U(\x,\y)=U(\x,\y)-T^\y_{m,\z}U(\x,\y)$
for the respective remainders.

Following the proof of \cite[Lemma 7]{DS16}, we consider the Boolean sum $B_{q,\z}(\x,\y)$ of Taylor polynomials 
of $K(\x,\y)$,
\begin{align*}
B_{m,\z}(\x,\y)&:=T^\x_{m,\z}K(\x,\y)+T^\y_{m,\z}K(\x,\y)-T^\x_{m,\z}T^\y_{m,\z}K(\x,\y)\\
&=\sum_{|\alpha|\le m}\frac{(\x-\z)^\alpha}{\alpha!}\partial^{\alpha,0}K(\z,\y)
+\sum_{|\alpha|\le m}\frac{(\y-\z)^\alpha}{\alpha!}\partial^{0,\alpha}K(\x,\z)\\
&\quad - \sum_{|\alpha|,|\beta|\le m}\frac{(\x-\z)^\alpha(\y-\z)^\beta}{\alpha!\beta!}
\partial^{\alpha,\beta}K(\z,\z),
\end{align*}
and conclude that
$$
K(\x,\y)-B_{m,\z}(\x,\y)=R(\x,\y):=R^\x_{m,\z}R^\y_{m,\z}K(\x,\y)\quad\text{for all }\x,\y\in\Omega_\z,$$
and, thanks to Assumption (c), 
\begin{equation}\label{Qlxw}
Q_{K,D,\X,\z}(\w)=\sum_{i,j=1}^N w_iw_jR(\x_i,\x_j).
\end{equation}

The above remainder formula for the Taylor polynomial implies 
that
$$
R(\x_i,\x_j)=m\!\!\sum_{|\alpha|=m}\!\frac{(\x_i-\z)^\alpha}{\alpha!}
\int_0^1(1-t)^{m-1}\big[\partial^{\alpha,0} \tilde R(\x^t_i,\x_j)
-\partial^{\alpha,0} \tilde R(\z,\x_j)\big]dt,$$
where $\x^t_i:=\z+t(\x_i-\z)$, and 
$$
\tilde R(\x,\y):=R^\y_{m,\z}K(\x,\y)=K(\x,\y)-T^\y_{m,\z}K(\x,\y)$$
satisfies
$$
\partial^{\alpha,0}\tilde R(\x,\x_j)=m\!\!\!\!\sum_{|\beta|=m}\!\!\!\frac{(\x_j-\z)^\beta}{\beta!}
\!\!\!\int_0^1(1-s)^{m-1}\big[\partial^{\alpha,\beta} K(\x,\x^s_j)
-\partial^{\alpha,\beta} K(\x,\z)\big]ds.$$
Since
\begin{align*}
\big|\partial^{\alpha,\beta} K(\x^t_i,\x^s_j)
&-\partial^{\alpha,\beta} K(\x^t_i,\z)-\partial^{\alpha,\beta} K(\z,\x^s_j)
+\partial^{\alpha,\beta} K(\z,\z)\big|\\
&\le\|\x^t_i-\z\|_2^\gamma\,\|\x^s_j-\z\|_2^\gamma\,\big|\partial^{\alpha,\beta} K\big|_{\z,\X,\gamma}\\
&=(ts)^\gamma\|\x_i-\z\|_2^\gamma\,\|\x_j-\z\|_2^\gamma\,\big|\partial^{\alpha,\beta} K\big|_{\z,\X,\gamma},
\end{align*}
and
$$
\int_0^1(1-t)^{m-1}t^\gamma dt=\frac{(m-1)!}{(\gamma+1)\cdots (\gamma+m)},$$
we obtain
\begin{align*}
\big|R(\x_i,\x_j)\big|\le\;&\Big(\frac{m!}{(\gamma+1)\cdots (\gamma+m)}\Big)^2\|\x_i-\z\|_2^\gamma\,\|\x_j-\z\|_2^\gamma\\
&\sum_{|\alpha|=m}\!\frac{|(\x_i-\z)^\alpha|}{\alpha!}\sum_{|\beta|=m}\!\!\!\frac{|(\x_j-\z)^\beta|}{\beta!}
\,\big|\partial^{\alpha,\beta} K\big|_{\z,\X,\gamma}.
\end{align*}
By the multinomial theorem,
\begin{equation*}%
\sum_{|\alpha|=m}\frac{(\x-\z)^{2\alpha}}{\alpha!}=\frac{1}{m!}\|\x-\z\|_2^{2m}.
\end{equation*}
Hence, it follows by the Cauchy-Schwarz inequality, that
\begin{align*}
\sum_{|\alpha|=m}\!\frac{|(\x_i-\z)^\alpha|}{\alpha!}&\sum_{|\beta|=m}\!\!\!\frac{|(\x_j-\z)^\beta|}{\beta!}
\,\big|\partial^{\alpha,\beta} K\big|_{\z,\X,\gamma}\\
&\!\!\!\le \frac{\|\x_i-\z\|_2^m\|\x_j-\z\|_2^m}{m!}
\Big(\sum_{|\alpha|=m}\;\sum_{|\beta|=m}
\frac{\big|\partial^{\alpha,\beta} K\big|_{\z,\X,\gamma}^2}{\alpha!\beta!}\Big)^{1/2},
\end{align*}
which implies 
$$
\big|R(\x_i,\x_j)\big|\le \|\x_i-\z\|_2^{m+\gamma}\|\x_j-\z\|_2^{m+\gamma}|K|_{\z,\X,m+\gamma}.$$
In view of this, \eqref{Qestbt2} follows from \eqref{Qlxw}.
\end{proof}

\medskip

Recall from \cite{DS18} that the \emph{growth function}
$$
\rho_{q,D}(\z,\X,\mu):=\sup\{D p(\z):p\in \Pi_q^d,\;
|p(\x_j)|\leq \|\x_j-\z\|_2^{\mu},\;j=1,\ldots,N\},$$
where $\mu\ge 0$, coincides with  
\begin{align*}
\inf\Big\{\sum_{j=1}^{N} |w_j|\|\x_j-\z\|_2^{\mu}:\w\in\RR^N,\,D p(\z)=\sum_{j=1}^Nw_jp(\x_j)
\text{ for all }p\in\Pi_q^d\Big\}.
\end{align*}

Therefore, Propositions~\ref{powerK} and \ref{fpestbt} imply the following estimate of the error of the kernel-based
numerical differentiation.

\begin{theorem}\label{pestbt}
Let $D$ be a linear differential operator of order $k$, and $K$ a cpd kernel of order $s$ on $\Omega\subset\RR^d$. 
Given $\z\in\Omega$ and $\X=\{\x_1,\ldots,\x_N\}$, with $S_{\z,\X}\subset\Omega$,  assume that  the linear system 
\eqref{wsysp} is consistent, and for some $r\ge k$ and $\gamma\in(0,1]$, the kernel $K$ possesses continuous  partial
derivatives   $\partial^{\alpha,\beta} K(\x,\y)$  for all $|\alpha|,|\beta|\le r$ and all  $\x,\y\in S_{\z,\X}$, and 
$|K|_{\z,\X,r+\gamma}<\infty$. Then for all $f\in {\cal F}_{K}$ the kernel-based numerical differentiation formula
\eqref{ndif} satisfies the error bound
\begin{equation}\label{gpest}
\Big|Df(\z) -\sum_{j=1}^N w^*_jf(\x_j)\Big|\le \rho_{q,D}(\z,\X,r+\gamma)
\,|K|_{\z,\X,r+\gamma}^{1/2}\,\|f\|_{K},
\end{equation}
where $q=\max\{s, r+1\}$.
\end{theorem}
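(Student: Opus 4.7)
The plan is to chain together Propositions~\ref{powerK} and~\ref{fpestbt} and then invoke the infimum representation of the growth function $\rho_{q,D}$ recalled just before the theorem. Proposition~\ref{powerK} reduces the desired estimate \eqref{gpest} to the power-function bound
$$P_{K,D,\X,\z}\le \rho_{q,D}(\z,\X,r+\gamma)\,|K|_{\z,\X,r+\gamma}^{1/2},$$
and its hypotheses (a)--(c) are immediate under the present assumptions: the existence of the mixed derivatives of $K$ at $(\z,\z)$ and of $D'K(\z,\x_i)$ follow from the continuity of $\partial^{\alpha,\beta}K$ for $|\alpha|,|\beta|\le r\ge k$ on $S_{\z,\X}$, while consistency of \eqref{wsysp} is explicitly assumed.

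Next, I would pass to the richer polynomial space $\Pi^d_q$ with $q:=\max\{s,r+1\}$. Because $q\ge s$, any $\w\in\RR^N$ with $Dp(\z)=\sum_j w_jp(\x_j)$ for all $p\in\Pi^d_q$ automatically obeys $\eps_\w(\Pi^d_s)=0$, so \eqref{est2p} yields $P_{K,D,\X,\z}^2\le Q_{K,D,\X,\z}(\w)$ for every such $\w$. Applying Proposition~\ref{fpestbt} with $m:=r$ then gives
$$
P_{K,D,\X,\z}^2\le Q_{K,D,\X,\z}(\w)\le \Big(\sum_{i=1}^N|w_i|\,\|\x_i-\z\|_2^{r+\gamma}\Big)^2|K|_{\z,\X,r+\gamma}.
$$
Its parameter constraints $q\ge\max\{s,k+1\}$ and $k\le m\le q-1$ are satisfied because $q\ge r+1\ge k+1$, $q\ge s$, and $m=r\in[k,q-1]$, and its smoothness hypotheses (a)--(b) match the assumptions of the theorem verbatim.

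Finally, I would take the infimum over all such $\w$ and identify it with $\rho_{q,D}(\z,\X,r+\gamma)$ via the dual formula recalled just before the theorem; this gives the required bound on $P_{K,D,\X,\z}$, and combined with \eqref{PestK} produces \eqref{gpest}. In the degenerate case where no $\w$ with polynomial exactness on $\Pi^d_q$ exists, the growth function equals $+\infty$ and the estimate is vacuous. I do not anticipate any substantial analytic difficulty here: the argument is a concatenation of already-established inequalities, and the only subtlety is the parameter bookkeeping, namely choosing the same $q$ so that $q\ge s$ (needed to use \eqref{est2p}) holds simultaneously with the constraints $q\ge\max\{s,k+1\}$ and $k\le m\le q-1$ required by Proposition~\ref{fpestbt}.
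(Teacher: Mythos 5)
Your proposal is correct and follows exactly the route the paper intends: the paper gives no separate proof of Theorem~\ref{pestbt}, stating only that it follows from Propositions~\ref{powerK} and~\ref{fpestbt} together with the infimum characterization of $\rho_{q,D}(\z,\X,\mu)$, which is precisely the chain you carry out (with the correct parameter choices $m=r$, $q=\max\{s,r+1\}$). The bookkeeping you verify is the only content of the deduction, and you have it right.
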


Since $\rho_{q,D}(\z,\X,\mu)=\sigma\, h_{\z,\X}^{\mu-k}$, with a scale-independent factor $\sigma$,
where 
$$%
h_{\z,\X}:=\displaystyle{  \max_{\x\in\X}\|\z-\x\|_2}.
$$%
see \cite[Section 4.2]{DS18}, we infer that \eqref{gpest} gives an $\cO(h_{\z,\X}^{r+\gamma-k})$ error bound for constellations $\z,\X$ in a
good geometric position. Note that $\rho_{q,D}(\z,\X,\mu)$ can be efficiently estimated  by least squares methods, 
see \cite[Sections 5 and 6]{DS18}.

By applying Proposition~\ref{fpestbt} with $k\le m\le r-1$ instead of $m=r$, and choosing $q=\max\{s,m+1\}$, 
and taking into account that $|K|_{\z,\X,q}<\infty$ if $\partial^{\alpha,\beta} K(\x,\y)$ are continuous 
for all $|\alpha|,|\beta|\le q$  and all  $\x,\y\in S_{\z,\X}$, we obtain the following statement.

\begin{corollary}\label{estold}
Under the hypotheses of Theorem~\ref{pestbt}, for all $f\in {\cal F}_{K}$ and all $q$ satisfying $\max\{s,k+1\}\le q\le r$, 
\begin{equation}\label{gpestold}
\Big|Df(\z) -\sum_{j=1}^N w^*_jf(\x_j)\Big|\le \rho_{q,D}(\z,\X,q)
\,|K|_{\z,\X,q}^{1/2}\,\|f\|_{K}.
\end{equation}
\end{corollary}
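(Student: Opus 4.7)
The plan is to apply Proposition~\ref{fpestbt} with the choice $m:=q-1$ and H\"older exponent $\gamma:=1$, so that $m+\gamma=q$, and then combine the resulting bound for $Q_{K,D,\X,\z}(\w)$ with Proposition~\ref{powerK} and the alternative expression for the growth function recalled just before Theorem~\ref{pestbt}.

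More concretely, fix $q$ with $\max\{s,k+1\}\le q\le r$ and put $m:=q-1$, $\gamma:=1$. Then $k\le m\le r-1$, and the integer parameter of Proposition~\ref{fpestbt} is $\max\{s,m+1\}=q$ (using $q\ge s$), which satisfies the requirement $q\ge\max\{s,k+1\}$. Assumption (a) of Proposition~\ref{fpestbt}, continuity of $\partial^{\alpha,\beta}K$ for $|\alpha|,|\beta|\le m$, is immediate from the hypothesis of Theorem~\ref{pestbt} since $m\le r-1$. For assumption (b) one must verify $|K|_{\z,\X,q}<\infty$: this is the mildly non-routine point, and it follows because for $|\alpha|=|\beta|=m$ the function $\partial^{\alpha,\beta}K$ is continuously differentiable in each argument on the compact set $S_{\z,\X}$, so that each difference entering $|\partial^{\alpha,\beta}K|_{\z,\X,1}$ can be written, via two successive fundamental-theorem-of-calculus steps (one in $\x$, one in $\y$), as a product $\|\x-\z\|_2\,\|\y-\z\|_2$ times an average of continuous mixed derivatives of order $\le q$ in each variable, which are uniformly bounded on $S_{\z,\X}\times S_{\z,\X}$.

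With all hypotheses in place, Proposition~\ref{fpestbt} gives, for every $\w\in\RR^N$ with $\eps_{\w}p=0$ for all $p\in\Pi^d_q$,
$$
Q_{K,D,\X,\z}(\w)\le\Big(\sum_{i=1}^N|w_i|\,\|\x_i-\z\|_2^{q}\Big)^{\!2}|K|_{\z,\X,q}.
$$
Since $\Pi^d_s\subset\Pi^d_q$, any such $\w$ is also admissible in the minimization \eqref{est2p} defining $P_{K,D,\X,\z}^2$. Taking the infimum over these $\w$ and using the equivalent formula for the growth function recalled after Proposition~\ref{fpestbt}, I would conclude $P_{K,D,\X,\z}\le \rho_{q,D}(\z,\X,q)\,|K|_{\z,\X,q}^{1/2}$, and then substitute this bound into \eqref{PestK} of Proposition~\ref{powerK} to obtain \eqref{gpestold}. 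Apart from the brief verification of $|K|_{\z,\X,q}<\infty$, the whole argument is parameter bookkeeping on top of Propositions~\ref{powerK} and~\ref{fpestbt}.
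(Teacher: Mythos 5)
Your proposal is correct and follows essentially the same route as the paper: the paper also obtains Corollary~\ref{estold} by applying Proposition~\ref{fpestbt} with $m=q-1$ and $\gamma=1$ (so $m+\gamma=q$ and the exactness degree is $\max\{s,m+1\}=q$), noting that $|K|_{\z,\X,q}<\infty$ already follows from continuity of $\partial^{\alpha,\beta}K$ for $|\alpha|,|\beta|\le q$, and then combining with Proposition~\ref{powerK} and the infimum characterization of $\rho_{q,D}$. Your explicit justification of the finiteness of $|K|_{\z,\X,q}$ via two fundamental-theorem-of-calculus steps is exactly the intended argument behind the paper's brief remark.
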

Note that Corollary~\ref{estold} makes the same smoothness assumptions about $K$ as \cite[Theorem 9]{DS16}, and only relaxes
the condition on $\X$ that does not anymore have to be a determining set for $\Pi^d_s$, and uses a slightly different 
seminorm $|K|_{\z,\X,q}$ of $K$. In the same time, the estimate \eqref{gpest} of Theorem~\ref{pestbt} provides a higher
approximation order of numerical differentiation for certain kernels $K$, as we will demonstrate in the next section.

\section{Applications to translation-invariant kernels}\label{trinv}

We now consider the \emph{translation-invariant kernels} $K(\x,\y)=\Phi(\x-\y)$, for a function $\Phi:\RR^d\to\RR$, which in
particular covers the \emph{radial basis functions} $\Phi(\x)=\varphi(\|\x\|_2)$, where  $\varphi:[0,\infty)\to\RR$.

Denote by $C^{r,\gamma}(\Omega)$, $r=0,1,\ldots$, $0<\gamma\le 1$, $\Omega\subset\RR^d$, 
the \emph{H\"older space} that consists of all $r$ times
continuously differentiable functions $f$ on $\Omega$ such that
$$
|f|_{C^{r,\gamma}(\Omega)}:=\max_{|\alpha|=r}|\partial^\alpha\! f|_{\Omega,\gamma}<\infty,$$
where
$$
|f|_{C^{0,\gamma}(\Omega)}:=\sup_{\x,\y\in\Omega,\, \x\ne\y}\frac{|f(\x)-f(\y)|}{\|\x-\y\|^\gamma},\quad 0<\gamma\le 1.$$

Given $\z\in\RR^d$ and $\X=\{\x_1,\ldots,\x_N\}\subset\RR^d$, we denote by $B_{\z,\X}$ the closed ball in $\RR^d$ centered at the
origin and with radius equal to the diameter of the set $S_{\z,\X}$.

\begin{lemma}\label{holder}
Assume that $K(\x,\y)=\Phi(\x-\y)$, and $\Phi\in C^{r,\gamma}(B_{\z,\X})$. Then 
$K$ possesses continuous  partial derivatives  $\partial^{\alpha,\beta} K(\x,\y)$ for all
$|\alpha|,|\beta|\le m$, where $m=\lfloor r/2\rfloor$, and 
\begin{equation}\label{hest1}
|\partial^{\alpha,\beta}K|_{\z,\X,(r-2m+\gamma)/2}\le \max\{2,\sqrt{d}\}|\Phi|_{C^{r,\gamma}(B_{\z,\X})},
\quad |\alpha|=|\beta|=m,
\end{equation}
\begin{equation}\label{hest2}
|K|_{\z,\X,(r+\gamma)/2}\le C_{d,r}|\Phi|_{C^{r,\gamma}(B_{\z,\X})},
\end{equation}
where 
\begin{equation}\label{Cdr}
C_{d,r}=
\begin{cases}
\frac{2d^{r/2}}{(\gamma/2+1)^2\cdots (\gamma/2+\lfloor r/2\rfloor)^2} &\text{if $r$ is even},\\
\frac{d^{r/2}}{((1+\gamma)/2+1)^2\cdots ((1+\gamma)/2+\lfloor r/2\rfloor)^2} &\text{if $r$ is odd.}
\end{cases}
\end{equation}
\end{lemma}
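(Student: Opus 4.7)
The plan is to exploit translation invariance, reducing everything to derivatives of $\Phi$. Since $\partial^{\alpha,\beta}K(\x,\y)=(-1)^{|\beta|}\partial^{\alpha+\beta}\Phi(\x-\y)$ and $|\alpha|,|\beta|\le m=\lfloor r/2\rfloor$ gives $|\alpha+\beta|\le 2m\le r$, the hypothesis $\Phi\in C^{r,\gamma}(B_{\z,\X})$ immediately delivers continuity of all the required mixed partials on $S_{\z,\X}\times S_{\z,\X}$. For \eqref{hest1} I would fix $|\alpha|=|\beta|=m$, write $\Psi:=\partial^{\alpha+\beta}\Phi$, and set $\u:=\x-\z$, $\v:=\z-\y$: the four-term difference in the definition of $|\partial^{\alpha,\beta}K|_{\z,\X,\gamma'}$ collapses to $\pm[\Psi(\u+\v)-\Psi(\u)-\Psi(\v)+\Psi(0)]$, so the goal reduces to bounding this by a constant times $\|\u\|_2^{\gamma'}\|\v\|_2^{\gamma'}$ with $\gamma'=(r-2m+\gamma)/2$.

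I would then split on the parity of $r$. If $r=2m$, then $\Psi\in C^{0,\gamma}$: grouping the four terms in two different ways and applying the triangle inequality produces one-sided bounds $2|\Psi|_{C^{0,\gamma}}\|\u\|_2^\gamma$ and $2|\Psi|_{C^{0,\gamma}}\|\v\|_2^\gamma$, and taking the geometric mean gives the symmetric bound with exponent $\gamma/2$ and constant $2$. If $r=2m+1$, then $\Psi\in C^{1,\gamma}$: the fundamental theorem of calculus rewrites
\[
\Psi(\u+\v)-\Psi(\u)-\Psi(\v)+\Psi(0)=\int_0^1\bigl[\nabla\Psi(\u+t\v)-\nabla\Psi(t\v)\bigr]\cdot\v\,dt,
\]
and componentwise $\gamma$-H\"older continuity of $\nabla\Psi$ combined with Cauchy--Schwarz bounds the integrand by $\sqrt{d}\,|\Phi|_{C^{r,\gamma}}\|\u\|_2^\gamma\|\v\|_2$; swapping the roles of $\u$ and $\v$ and taking the geometric mean again yields exponent $(1+\gamma)/2$ with constant $\sqrt{d}$. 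In both cases the relevant seminorm of $\Psi$ is dominated by $|\Phi|_{C^{r,\gamma}(B_{\z,\X})}$, which gives \eqref{hest1}.

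For \eqref{hest2} I would feed \eqref{hest1} into the definition of $|K|_{\z,\X,m+\gamma'}$ with $\gamma'=(r-2m+\gamma)/2$, factor the common constant out of the double sum, and invoke the multinomial identity $\sum_{|\alpha|=m}\binom{m}{\alpha}=d^m$ (from $(1+\cdots+1)^m=d^m$) to evaluate the remaining combinatorial sum as $(d^m)^2$. Taking the square root, the constant $c\cdot d^m$ equals $2d^{r/2}$ for $r$ even and $\sqrt{d}\cdot d^{(r-1)/2}=d^{r/2}$ for $r$ odd, reproducing \eqref{Cdr} exactly. The hard part will be the odd-$r$ case of \eqref{hest1}: extracting a symmetric, dimensionally sharp bound from the integral remainder requires both Cauchy--Schwarz and an explicit symmetrization step to equalize the exponents on $\|\u\|_2$ and $\|\v\|_2$, whereas the even case and the subsequent combinatorial bookkeeping are essentially mechanical.
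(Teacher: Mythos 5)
Your proposal is correct and follows essentially the same route as the paper's proof: the same reduction via $\partial^{\alpha,\beta}K(\x,\y)=(-1)^{|\beta|}\partial^{\alpha+\beta}\Phi(\x-\y)$, the same parity split with the two-grouping/geometric-mean argument for even $r$ and the fundamental-theorem-of-calculus plus H\"older-continuity-of-the-gradient argument for odd $r$, and the same multinomial bookkeeping for \eqref{hest2}. The substitution $\u=\x-\z$, $\v=\z-\y$ is only a cosmetic repackaging of the paper's $\Delta_{\z,\x,\y}U$ computation.
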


\begin{proof}
The first statement follows because 
$$\partial^{\alpha,\beta} K(\x,\y)
=\frac{\partial^{|\alpha|}}{\partial \x^{\alpha}}\frac{\partial^{|\beta|}}{\partial \y^{\beta}}\Phi(\x-\y)
=(-1)^{|\beta|}\partial^{\alpha+\beta}\Phi(\x-\y).$$
To show \eqref{hest1} and \eqref{hest2}, we consider separately the cases of an even and an odd $r$.

Let $r=2m$. Then $(r+\gamma)/2=m+\gamma/2$. Let $|\alpha|=|\beta|=m$, hence $|\alpha+\beta|=r$. For 
$U(\x,\y)=\partial^{\alpha+\beta}\Phi(\x-\y)$, let
$$
\Delta_{\z,\x,\y}U=U(\x,\y)-U(\x,\z)-U(\z,\y)+U(\z,\z).$$
As soon as $\x,\y\in S_{\z,\X}$ we have
\begin{align*}
|\Delta_{\z,\x,\y}U|&\le\big|\partial^{\alpha+\beta}\Phi(\x-\y)-\partial^{\alpha+\beta}\Phi(\x-\z)\big|
+\big|\partial^{\alpha+\beta}\Phi(\z-\y)-\partial^{\alpha+\beta}\Phi(0)\big|\\
&\le 2|\partial^{\alpha+\beta}\Phi|_{C^{0,\gamma}(B_{\z,\X})}\,\|\y-\z\|_2^\gamma,\quad\text{and} \\
|\Delta_{\z,\x,\y}U|&\le\big|\partial^{\alpha+\beta}\Phi(\x-\y)-\partial^{\alpha+\beta}\Phi(\z-\y)\big|
+\big|\partial^{\alpha+\beta}\Phi(\x-\z)-\partial^{\alpha+\beta}\Phi(0)\big|\\
&\le 2|\partial^{\alpha+\beta}\Phi|_{C^{0,\gamma}(B_{\z,\X})}\,\|\x-\z\|_2^\gamma,
\end{align*}
since all points $\x-\y,\x-\z,\z-\y,0$ belong to $B_{\z,\X}$. It follows that
\begin{align*}
|\Delta_{\z,\x,\y}U|&\le 2|\partial^{\alpha+\beta}\Phi|_{C^{0,\gamma}(B_{\z,\X})}\min\{\|\x-\z\|_2^\gamma,\|\y-\z\|_2^\gamma\}\\
&\le 2|\partial^{\alpha+\beta}\Phi|_{C^{0,\gamma}(B_{\z,\X})}\,\|\x-\z\|_2^{\gamma/2}\|\y-\z\|_2^{\gamma/2},
\end{align*}
which implies \eqref{hest1}.

Let now $r=2m+1$ and $|\alpha|=|\beta|=m$. Then $|\alpha+\beta|=r-1$, and hence 
$U(\x,\y)=\partial^{\alpha+\beta}\Phi(\x-\y)=:\Phi_{\alpha,\beta}(\x-\y)$ is continuously differentiable, with its partial
derivatives of the first order in $C^{0,\gamma}(B_{\z,\X})$. Therefore,
with $\y^s:=\z+s(\y-\z)\in S_{\z,\X}$, we have
\begin{align*}
U(\x,\y)-U(\x,\z)&=-\int_0^1\nabla \Phi_{\alpha,\beta}(\x-\y^s)\cdot (\y-\z)\,ds,\\
U(\z,\y)-U(\z,\z)&=-\int_0^1\nabla \Phi_{\alpha,\beta}(\z-\y^s)\cdot (\y-\z)\,ds,
\end{align*}
such that
\begin{align*}
|\Delta_{\z,\x,\y}U|
&=\Big|\int_0^1\big(\nabla \Phi_{\alpha,\beta}(\x-\y^s)-\nabla \Phi_{\alpha,\beta}(\z-\y^s)\big)\cdot (\y-\z)\,ds\Big|\\
&\le \sqrt{d}|\Phi|_{C^{r,\gamma}(B_{\z,\X})}\,\|\x-\z\|_2^\gamma\|\y-\z\|.
\end{align*}
Similarly to the above, we obtain
$$
|\Delta_{\z,\x,\y}U|\le \sqrt{d}|\Phi|_{C^{r,\gamma}(B_{\z,\X})}\,\|\x-\z\|_2\|\y-\z\|^\gamma.$$
Hence
$$
|\Delta_{\z,\x,\y}U|\le \sqrt{d}|\Phi|_{C^{r,\gamma}(B_{\z,\X})}\,\|\x-\z\|_2^{(1+\gamma)/2}\|\y-\z\|^{(1+\gamma)/2},$$
which implies \eqref{hest1} also in this case.

The estimate \eqref{hest2} follows from the above using the definition of $|K|_{\z,\X,m+\theta}$, where we take 
$\theta=\gamma/2$ if $r$ is even, and $\theta=(1+\gamma)/2$ if $r$ is odd.
\end{proof}

Using this lemma we immediately obtain from Theorem~\ref{pestbt} the following error bound for translation-invariant kernels.

\begin{corollary}\label{pestPhi}
Let $D$ be a linear differential operator of order $k$, and $K(\x,\y)=\Phi(\x-\y)$ a translation-invariant
cpd kernel of order $s$ on $\RR^d$. 
Given $\z\in\RR^d$ and $\X=\{\x_1,\ldots,\x_N\}\subset\RR^d$,  assume that  the linear system 
\eqref{wsysp} is consistent, and $\Phi\in C^{r,\gamma}(B_{\z,\X})$ for some $r\ge 2k$ and $\gamma\in(0,1]$.
Then for all $f\in {\cal F}_{K}$ the kernel-based numerical differentiation formula
\eqref{ndif} satisfies the error bound
\begin{equation}\label{gestPhi}
\Big|Df(\z) -\sum_{j=1}^N w^*_jf(\x_j)\Big|\le \rho_{q,D}(\z,\X,\tfrac{r+\gamma}{2})
\,C_{d,r}^{1/2}|\Phi|_{C^{r,\gamma}(B_{\z,\X})}^{1/2}\,\|f\|_{K},
\end{equation}
where $q=\max\{s, \lfloor r/2\rfloor+1\}$, and $C_{d,r}$ is given by \eqref{Cdr}.
\end{corollary}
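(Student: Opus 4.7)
The plan is to reduce Corollary~\ref{pestPhi} to Theorem~\ref{pestbt} by invoking Lemma~\ref{holder} as the bridge between the H\"older regularity of $\Phi$ and the mixed seminorm $|K|_{\z,\X,\cdot}$ of the kernel $K(\x,\y)=\Phi(\x-\y)$. The entire nontrivial analytic content has been packaged into the lemma; what remains is a bookkeeping exercise that matches parameters and concatenates the two results.

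First I would fix the parameter correspondence. Set $m:=\lfloor r/2\rfloor$, and choose $\theta:=\gamma/2$ when $r$ is even and $\theta:=(1+\gamma)/2$ when $r$ is odd, so that $m+\theta=(r+\gamma)/2$ and $\theta\in(0,1]$ in either parity. The hypothesis $r\ge 2k$ of the corollary yields $m\ge k$, which is exactly the lower bound the theorem requires from its smoothness index. Moreover $q:=\max\{s,m+1\}=\max\{s,\lfloor r/2\rfloor+1\}$ coincides with the $q$ appearing in the statement.

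Next I would verify the hypotheses of Theorem~\ref{pestbt} with these primed parameters. The consistency of \eqref{wsysp} is assumed, and Lemma~\ref{holder} supplies the remaining two ingredients: continuity of all partial derivatives $\partial^{\alpha,\beta}K(\x,\y)$ for $|\alpha|,|\beta|\le m$ on $S_{\z,\X}\times S_{\z,\X}$, together with the explicit bound $|K|_{\z,\X,m+\theta}=|K|_{\z,\X,(r+\gamma)/2}\le C_{d,r}|\Phi|_{C^{r,\gamma}(B_{\z,\X})}<\infty$. Applying Theorem~\ref{pestbt} with $r$ and $\gamma$ there replaced by $m$ and $\theta$ gives the error estimate in terms of $\rho_{q,D}(\z,\X,(r+\gamma)/2)\,|K|_{\z,\X,(r+\gamma)/2}^{1/2}$, and substituting the seminorm bound from Lemma~\ref{holder} yields \eqref{gestPhi}.

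The main obstacle was really the conversion from the one-sided H\"older exponent $\gamma$ available on $\partial^{r}\Phi$ to the symmetric mixed H\"older exponent $(r-2m+\gamma)/2$ required by the definition of $|K|_{\z,\X,\cdot}$; but this was already handled inside Lemma~\ref{holder} via the estimate on $\Delta_{\z,\x,\y}\partial^{\alpha+\beta}\Phi$ combined with the elementary bound $\min\{a^\gamma,b^\gamma\}\le a^{\gamma/2}b^{\gamma/2}$ (and the analogous mean-value argument when $r$ is odd). Because of this, the corollary itself demands no new estimates: it is essentially a one-line deduction from the two previously established results.
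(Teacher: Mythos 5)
Your proposal is correct and follows exactly the paper's route: the paper states that the corollary is obtained ``immediately'' from Theorem~\ref{pestbt} via Lemma~\ref{holder}, and your parameter matching ($m=\lfloor r/2\rfloor$, $\theta=\gamma/2$ or $(1+\gamma)/2$ so that $m+\theta=(r+\gamma)/2$, $m\ge k$ from $r\ge 2k$, and $q=\max\{s,m+1\}$) is precisely the bookkeeping the author leaves implicit. No gaps.
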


\begin{remark}
Clearly, using either \cite[Theorem 9]{DS16} or Corollary~\ref{estold} we could only obtain a bound in terms of the growth
function $\rho_{q,D}(\z,\X,q)$ with $q\le m=\lfloor r/2\rfloor$, and hence at best an 
$\cO(h_{\z,\X}^{\lfloor r/2\rfloor-k})$ estimate of the numerical differentiation error, rather than 
$\cO(h_{\z,\X}^{(r+\gamma)/2-k})$ by Corollary~\ref{pestPhi}.
\end{remark}

\medskip

We now apply Corollary~\ref{pestPhi} to several families of radial basis functions with finite smoothness. Their properties
can be found for example in the books \cite{Buhmann03,Fasshauer07,Wendland}.

\medskip

\noindent
\textbf{Polyharmonic spline:}\quad $\Phi(\x)=\Phi_{\text{PHS},\nu}(\x):=(-1)^{\lceil\nu/2\rceil}\|\x\|_2^\nu$, $\nu>0$,
$\nu\notin 2\NN$, conditionally positive definite of order $s\ge\lceil\nu/2\rceil$. It can be easily shown that 
$\Phi_{\text{PHS},\nu}\in C^{r,\gamma}(B)$ for any ball $B\subset\RR^d$ centered at the origin, where 
$r=\lceil\nu\rceil-1$, $\gamma=\nu-r$.

\begin{corollary}\label{pestPHS}
Let $D$ be a linear differential operator of order $k$, and $K(\x,\y)=\Phi_{\text{PHS},\nu}(\x-\y)$ for some  
$\nu>2k$, $\nu\notin 2\NN$, and let $s\ge\lceil\nu/2\rceil$.
Given $\z\in\RR^d$ and $\X=\{\x_1,\ldots,\x_N\}\subset\RR^d$,  assume that  the linear system 
\eqref{wsysp} is consistent.
Then for all $f\in {\cal F}_{K}$ the kernel-based numerical differentiation formula
\eqref{ndif} satisfies the error bound
$$%
\Big|Df(\z) -\sum_{j=1}^N w^*_jf(\x_j)\Big|\le \rho_{s,D}(\z,\X,\tfrac{\nu}{2})
\,C_{d,r}^{1/2}|\Phi_{\text{PHS},\nu}|_{C^{r,\gamma}(B_{\z,\X})}^{1/2}\,\|f\|_{K},
$$%
where 
$r=\lceil\nu\rceil-1$, $\gamma=\nu-r$,
and $C_{d,r}$ is given by \eqref{Cdr}.
\end{corollary}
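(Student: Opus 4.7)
The plan is to obtain this corollary as a direct specialization of Corollary \ref{pestPhi} applied to the translation-invariant kernel $K(\x,\y)=\Phi_{\text{PHS},\nu}(\x-\y)$, so the real work reduces to checking the hypotheses and simplifying the conclusion. First I would invoke the regularity assertion stated just before the corollary, namely $\Phi_{\text{PHS},\nu}\in C^{r,\gamma}(B_{\z,\X})$ with $r=\lceil\nu\rceil-1$ and $\gamma=\nu-r\in(0,1]$. If a self-contained justification is wanted, I would note that away from the origin $\|\x\|_2^\nu$ is $C^\infty$, and that by an induction on the order of differentiation each partial derivative $\partial^\alpha\|\x\|_2^\nu$ with $|\alpha|=r$ is positively homogeneous of degree $\gamma\in(0,1]$ and smooth off the origin; by a standard rescaling argument on a bounded ball, any such function lies in $C^{0,\gamma}$.

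Next I would verify the remaining hypotheses of Corollary \ref{pestPhi}. Consistency of \eqref{wsysp} is part of the hypothesis. The bound $r\ge 2k$ follows because $\nu>2k$ together with $\nu\notin 2\NN$ forces $\lceil\nu\rceil\ge 2k+1$, hence $r=\lceil\nu\rceil-1\ge 2k$. Then I would confirm that the index $q=\max\{s,\lfloor r/2\rfloor+1\}$ in Corollary \ref{pestPhi} is simply $s$ under the present assumption $s\ge\lceil\nu/2\rceil$. A short case analysis on the three cases compatible with $\nu\notin 2\NN$ (namely $\nu\in(2j,2j+1)$, $\nu=2j+1$, or $\nu\in(2j+1,2j+2)$) shows in each case that $\lfloor r/2\rfloor+1=\lceil\nu/2\rceil$, and the hypothesis $s\ge\lceil\nu/2\rceil$ then yields $q=s$.

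Finally I would plug these identifications into the bound \eqref{gestPhi}. The Hölder exponent in the growth function evaluates to $(r+\gamma)/2=(\lceil\nu\rceil-1+\nu-\lceil\nu\rceil+1)/2=\nu/2$, giving the growth-function argument $\rho_{s,D}(\z,\X,\nu/2)$ with the constant $C_{d,r}$ and seminorm $|\Phi_{\text{PHS},\nu}|_{C^{r,\gamma}(B_{\z,\X})}$ taken from \eqref{Cdr}, which is exactly the claimed inequality.

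I do not anticipate a genuine obstacle here: Corollary \ref{pestPhi} does all the analytic heavy lifting, and what remains is bookkeeping. The only place a reader might want more detail is the elementary matching of the integer indices $r$, $\lfloor r/2\rfloor+1$ and $\lceil\nu/2\rceil$; if the Hölder regularity of $\|\x\|_2^\nu$ is not taken as known, establishing it from the homogeneity of the top-order derivatives is the most technical step but is still routine.
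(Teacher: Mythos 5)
Your proposal is correct and follows exactly the route the paper intends: Corollary~\ref{pestPHS} is stated as an immediate specialization of Corollary~\ref{pestPhi} using the asserted regularity $\Phi_{\text{PHS},\nu}\in C^{r,\gamma}(B_{\z,\X})$ with $r=\lceil\nu\rceil-1$, $\gamma=\nu-r$. Your index bookkeeping ($r\ge 2k$, $\lfloor r/2\rfloor+1=\lceil\nu/2\rceil$ so $q=s$, and $(r+\gamma)/2=\nu/2$) and the homogeneity argument for the H\"older regularity are all sound.
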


\medskip

\noindent
\textbf{Thin plate splines:}\quad $\Phi(\x)=\Phi_{\text{TPS},n}(\x):=(-1)^{n+1}\|\x\|_2^{2n}\log \|\x\|_2$, $n\in \NN$, 
conditionally positive definite of order $s\ge n+1$. As shown for example in \cite[p.~186]{Wendland},  
$\Phi_{\text{TPS},n}\in C^{2n-1,\gamma}(B)$ for any ball $B\subset\RR^d$ centered at the origin and every $0<\gamma<1$.

\begin{corollary}\label{pestTPS}
Let $D$ be a linear differential operator of order $k$, and $K(\x,\y)=\Phi_{\text{TPS},n}(\x-\y)$ for some  
$n\ge k+1$, and let $s\ge n+1$.
Given $\z\in\RR^d$ and $\X=\{\x_1,\ldots,\x_N\}\subset\RR^d$,  assume that  the linear system 
\eqref{wsysp} is consistent.
Then for all $f\in {\cal F}_{K}$ the kernel-based numerical differentiation formula
\eqref{ndif} satisfies for any $0<\gamma<1$ the error bound
$$%
\Big|Df(\z) -\sum_{j=1}^N w^*_jf(\x_j)\Big|\le \rho_{s,D}(\z,\X,n-\tfrac{1-\gamma}{2})
\,C_{d,2n-1}^{1/2}|\Phi_{\text{TPS},n}|_{C^{2n-1,\gamma}(B_{\z,\X})}^{1/2}\,\|f\|_{K},
$$%
where $C_{d,2n-1}$ is given by \eqref{Cdr}.
\end{corollary}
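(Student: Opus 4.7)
The plan is to invoke Corollary~\ref{pestPhi} as a black box with $\Phi=\Phi_{\text{TPS},n}$, so the work reduces to bookkeeping. First I would record the smoothness input $\Phi_{\text{TPS},n}\in C^{2n-1,\gamma}(B)$ for every $0<\gamma<1$ and every ball $B\subset\RR^d$ centered at the origin, citing \cite[p.~186]{Wendland}; this is the only nontrivial fact required, and it is already stated in the paragraph preceding the corollary. With this in hand, set $r:=2n-1$ in Corollary~\ref{pestPhi}. The admissibility requirement $r\ge 2k$ then reads $2n-1\ge 2k$, which is equivalent to $n\ge k+1$ since $n,k\in\NN$, exactly the assumption on $n$.

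Next I would carry out the arithmetic that identifies the constants in the statement with those delivered by Corollary~\ref{pestPhi}. With $r=2n-1$, the exponent of the growth function becomes
\[
\tfrac{r+\gamma}{2}=\tfrac{2n-1+\gamma}{2}=n-\tfrac{1-\gamma}{2},
\]
matching the statement. Likewise $q=\max\{s,\lfloor r/2\rfloor+1\}=\max\{s,n\}=s$, since $s\ge n+1>n$, so $\rho_{q,D}$ in \eqref{gestPhi} specializes to $\rho_{s,D}$. The multiplicative constant $C_{d,r}^{1/2}=C_{d,2n-1}^{1/2}$ and the H\"older seminorm $|\Phi_{\text{TPS},n}|_{C^{2n-1,\gamma}(B_{\z,\X})}^{1/2}$ transfer verbatim from \eqref{gestPhi}.

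There is no real obstacle here: consistency of \eqref{wsysp} and conditional positive definiteness of order $s\ge n+1$ are assumed directly, and $\Phi_{\text{TPS},n}$ is translation invariant, so every precondition of Corollary~\ref{pestPhi} is satisfied. Substituting $r=2n-1$ into \eqref{gestPhi} and using the identifications above then yields the claimed bound, valid for every admissible $\gamma\in(0,1)$.
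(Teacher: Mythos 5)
Your proposal is correct and matches the paper's (implicit) argument exactly: the corollary is obtained by specializing Corollary~\ref{pestPhi} to $r=2n-1$ using the stated fact $\Phi_{\text{TPS},n}\in C^{2n-1,\gamma}(B)$, and your bookkeeping ($2n-1\ge 2k\Leftrightarrow n\ge k+1$, $\tfrac{r+\gamma}{2}=n-\tfrac{1-\gamma}{2}$, $q=\max\{s,n\}=s$) is all accurate.
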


\medskip

\noindent
\textbf{Wendland's compactly supported radial basis functions:}\quad 
$\Phi(\x)=\Phi_{d,n}(\x)=\varphi_{d,n}(\|\x\|_2)$, $n=0,1,\ldots$, see \cite[Section 9.4]{Wendland} for a definition. They are strictly 
positive definite, and hence we can use any $s\ge0$. It is shown in  \cite{Wendland} that $\Phi_{d,n}$ is $2n$ times
continuously differentiable in $\RR^d$. Since $\varphi_{d,n}$ are piecewise polynomials, it follows that 
$\Phi_{d,n}\in C^{2n,1}(B)$ for any ball $B\subset\RR^d$ centered at the origin.

\begin{corollary}\label{pestWend}
Let $D$ be a linear differential operator of order $k$, and $K(\x,\y)=\Phi_{d,n}(\x-\y)$ for some  
$n\ge k$, and let $s\ge 0$.
Let $\z\in\RR^d$ and $\X=\{\x_1,\ldots,\x_N\}\subset\RR^d$. 
If $s>0$, we assume  that  the linear system  \eqref{wsysp} is consistent.
Then for all $f\in {\cal F}_{K}$ the kernel-based numerical differentiation formula
\eqref{ndif} satisfies  the error bound
$$
\Big|Df(\z) -\sum_{j=1}^N w^*_jf(\x_j)\Big|\le \rho_{q,D}(\z,\X,n+\tfrac12)
\,C_{d,2n}^{1/2}|\Phi_{d,n}|_{C^{2n,1}(B_{\z,\X})}^{1/2}\,\|f\|_{K},
$$
where $q=\max\{s, n+1\}$, and $C_{d,2n}$ is given by \eqref{Cdr}.
\end{corollary}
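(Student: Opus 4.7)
The plan is to derive this as a direct specialization of Corollary~\ref{pestPhi} with the specific choices $r = 2n$ and $\gamma = 1$, using the smoothness claim for $\Phi_{d,n}$ recorded in the paragraph immediately preceding the corollary.

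First I would justify invoking the smoothness statement. Since $\varphi_{d,n}$ is a piecewise polynomial with $2n$ continuous derivatives on $[0,\infty)$, and $\Phi_{d,n}(\x)=\varphi_{d,n}(\|\x\|_2)$, the partial derivatives of order $2n$ of $\Phi_{d,n}$ are themselves piecewise polynomial (hence locally Lipschitz), so $\Phi_{d,n}\in C^{2n,1}(B)$ for every ball $B$ centered at the origin. In particular this holds with $B=B_{\z,\X}$, giving the H\"older class smoothness needed to apply Corollary~\ref{pestPhi}.

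Next I would check the remaining hypotheses of Corollary~\ref{pestPhi}. The requirement $r\ge 2k$ becomes $2n\ge 2k$, which is the assumption $n\ge k$. The assumption that \eqref{wsysp} is consistent is imposed in the statement whenever $s>0$; in the case $s=0$ the polynomial space $\Pi_s^d=\{0\}$ and $M=0$, so \eqref{wsysp} is vacuous and automatically consistent, which is why the consistency hypothesis is not needed in that case. The consistency of \eqref{wsysp} also makes the formula \eqref{ndif} well defined by Proposition~\ref{derivlemma}.

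Finally I would just substitute the chosen parameters into the bound \eqref{gestPhi}. With $r=2n$ and $\gamma=1$ we have $(r+\gamma)/2 = n+\tfrac12$, and $q=\max\{s,\lfloor r/2\rfloor+1\}=\max\{s,n+1\}$, producing the stated inequality with the constant $C_{d,2n}$ given by \eqref{Cdr}. There is no real obstacle: all substantive analytic work (the Taylor/Boolean sum argument in Proposition~\ref{fpestbt} and the translation of H\"older smoothness of $\Phi$ into the seminorm $|K|_{\z,\X,\cdot}$ in Lemma~\ref{holder}) has already been done at the level of Corollary~\ref{pestPhi}, so this final step is a routine specialization.
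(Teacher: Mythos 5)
Your proposal is correct and follows exactly the route the paper intends: Corollary~\ref{pestWend} is a direct specialization of Corollary~\ref{pestPhi} with $r=2n$ and $\gamma=1$, using the fact that $\Phi_{d,n}\in C^{2n,1}(B_{\z,\X})$ stated just before the corollary, and your checks of $r\ge 2k$, of the value of $q$, and of the vacuous consistency of \eqref{wsysp} when $s=0$ are all accurate.
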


\bibliographystyle{abbrv}
\bibliography{meshless}

\end{document}